\newtheorem{thm}{Theorem}[section]
\newtheorem{lem}[thm]{Lemma}
\newtheorem{Definition}[thm]{Definition}
\newtheorem*{remark}{Remark}
\begin{document}

\title[Short version of title]{On the coefficient of The $n^{th}$ Cesaro mean of order $\alpha$
of bi- univalent functions}
\author{Adnan Ghazy Alamoush }

\maketitle

\begin{abstract}
The purpose of the present paper is to introduce a new subclasses of the function class  of bi-univalent functions defined in the open unit disc. Furthermore, we obtain estimates on the coefficients $|a_{2}|$
and $|a_{3}|$ for functions of this class. Some results related to this work will be briefly indicated.

\end{abstract}

\section{Introduction}
Let $A$ denote the class of the functions $f$ of the form
\begin{equation}\label{eg:}
    f(z)=z+\sum^{\infty}_{n=2} a_{n}z^{n}
\end{equation}
which are analytic in the open unit disc $ U=\{z\in C:|z|<1$\} and satisfy the normalization
condition $f(0)=f^{'}(0)-1=0$. Let $S$ be the subclass of A consisting of functions of the form (1) which are also univalent in $U$.

 A function $f\in A$  is said to be in the class of strongly bi-starlike functions of order $\alpha (0<\alpha\leq1)$,
denoted by $S^{*}_{\Sigma}(\alpha)$, if each of the following conditions is satisfied:

\begin{center}
$\left|\arg\left\{\frac{zf^{'}(z)}{f(z)}\right\}\right|<\frac{\alpha\pi}{2}, (|z|<1, 0<\alpha\leq1),$
\end{center}
and
\begin{center}
$\left|\arg\left\{\frac{zg^{'}(w)}{g(w)}\right\}\right|<\frac{\alpha\pi}{2}, (|w|<1, 0<\alpha\leq1),$
\end{center}
where $g$ is the extension of $f^{-1}$ to $U$ (for details see ( Brannan and Taha \cite{Brannan}).
And is said to be in the class of strongly bi-convex functions of order $\alpha(0<\alpha\leq1)$, denoted by $K_{\Sigma}(\alpha)$, if it
satisfies the following inequality
\begin{center}
$\left|\arg\left\{{1+\frac{zf^{''}(z)}{f^{'}(z)}}\right\}\right|<\frac{\alpha\pi}{2}, (|z|<1, 0<\alpha\leq1)$.
\end{center}
and

\begin{center}
$\left|\arg\left\{{1+\frac{wg^{''}(w)}{g^{'}(w)}}\right\}\right|<\frac{\alpha\pi}{2}, (|w|<1, 0<\alpha\leq1)$.
\end{center}
Where $g$ is the extension of $f^{-1}$ to $U$ .
Recall that the Koebe one-quarter theorem \cite{Duren2} ensures that the image of $D$ under every
univalent function $f\in S$ contains a disk of radius $\frac{1}{4}$. Thus every univalent function $f$
has an inverse $f^{-1}$ satisfying $f^{-1}(f(z)) = z$, $(z\in D)$ and
\begin{center}
$f^{-1}(f(w)) = w$, $(|w|<r_{0}f, r_{0}f\geq\frac{1}{4})$.
\end{center}
\begin{equation}\label{eg:}
         g(w)=w-a_{2}w^{2}+(2a_{2}^{2}-a_{3})w^{3}-(5a_{2}^{2}-5a_{2}a_{3}+a_{4})w^{4}+...\ .
\end{equation}
In recent years, many authors  discussed estimate on the
coefficients $|a_{2}|$ and  $|a_{3}|$ for subclasses of bi-univalent function (see for example \cite{Srivastava}, \cite{Frasin}, \cite{Alamoush1}, \cite{Alamoush2}, \cite{Alamoush3}, \cite{Alamoush4}).

Let $f:D\rightarrow C$ be an analytic function on $D$ having taylor expansion $f(z)=\Sigma^{\infty}_{n=1} a_{n}z^{n}, z\in D,$ with $a_{n}\in C$,\ $a_{1}=1 $ $,n=1,2,3,...\ .$ A function $f\in S$ is bi-univalent in $D$ if both $f$ and $f^{-1}$ are univalent in $D$.

\begin{flushleft}
The object of the present paper is to introduce a new subclasses of the function
class $\Sigma$ and to find estimates on the coefficients $|a_{2}|$ and  $|a_{3}|$ for new functions in these new subclasses of the function
class $\Sigma$.
 \begin{flushleft}

 We say that $\delta_{n}^{\alpha}f(z)$ is  The $n^{th}$ Cesaro mean of order $\alpha\geq0$ of $f$ is defined by
 \end{flushleft}

\ \\
\end{flushleft}
\begin{center}
$\delta_{n}^{\alpha}f(z)=z+\sum^{\infty}_{n=2}A_{n} a_{n}z^{n}$
\end{center}
where
 \begin{center}
$ A_{n}=   {\begin{array}{c}\frac { \left ( {\begin{array}{c} k+\alpha-n\\ k-n \end{array}} \right ) }
{ \left ( {\begin{array}{c} k+\alpha-1 \\ k-n\end{array}} \right ) } \end{array}},\ a_{1}=1.   $
\\

\end{center}
Let $D$ denote the open unit disk in $C$. It is well known that outer functions are zero-free on the unit disk. Outer functions, which play an important role in $H_{p}$ theory to find a suitable finite (polynomial) approximation for the outer infinite series $f$ so that the approximant reduces the zero-free property of $f$, arise in the characteristic equation which determines the stability of certain nonlinear systems of differential equations. Recall that an outer function is a function $f \in H_{p}$ of the form
\begin{center}

$f(z) = e^{i \gamma} e^{ \frac{1} {2 \pi} \int _{-\pi} ^ {\pi}
\frac{1 + e^{it}z} {1-e^{it}z} \log \psi (t)dt }$
\end{center}

\noindent where  $\psi(t) \geq 0$,    $\log  \psi(t)$   is in $L^1$ and  $\psi(t)$ is in $L^p$.  See \cite{Duren1} for the definitions and classical properties of outer functions.  Since any function $f$ in $H^1$ which has $1/f$ in $H^1$ is an outer function, then typical examples of outer functions can be generated by functions of the form
$\prod _ {k=1} ^ n (1 - e^{i \theta _ k}z)^{\alpha _ k}$ for $-1 <  \alpha _k < 1$.

We observe for outer functions that the standard Taylor approximants do not, in general, retain
the zero-free property of $f$. It was shown in \cite{Barnard} that the Taylor approximating polynomials to outer functions can vanish in the unit disk. By using convolution methods that the classical Cesaro means, retains the zero-free property of the derivatives of bounded convex functions in the unit disk. The classical Cesaro means play an important role in geometric function theory (see \cite{Ruscheweyh1},\cite{Ruscheweyh2}).

\begin{lem}

\textit{If $h\in p$  then $|c_{k}|<1,$  for each $k$, where $p$ is the family of all functions $h$ analytic in $U$ for which $\Re\{h(z)\}>0$, then}
\begin{center}
$h(z)=1+c_{1}z+c_{2}z^{2}+c_{3}z^{3}+...\ , z\in U$.
\end{center}
\end{lem}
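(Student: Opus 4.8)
The plan is to linearize the positivity hypothesis with the Cayley transform and then read off the coefficient bound from the Schwarz lemma, since it is precisely the Schwarz lemma that produces the constant $1$ appearing in the statement. First I would set $\phi(w)=\frac{w-1}{w+1}$, the conformal map carrying the right half plane $\{\Re w>0\}$ onto $U$ with $\phi(1)=0$, and define $\omega=\phi\circ h$. Because $\Re\{h(z)\}>0$ on $U$ and $h(0)=1$, the composite $\omega$ is analytic on $U$, satisfies $\omega(0)=0$, and maps $U$ into $U$; that is, $\omega$ is a Schwarz function. Writing $\omega(z)=\sum_{k\ge1}d_{k}z^{k}$, the whole problem is thereby transferred from $h$ to $\omega$, whose coefficients are exactly the quantities the constant $1$ is designed to bound.

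Next I would extract the coefficient estimates for $\omega$. Since $|\omega(z)|<1$ on $U$, Parseval's identity on the circle $|z|=r<1$ gives $\sum_{k\ge1}|d_{k}|^{2}r^{2k}\le1$, and letting $r\to1^{-}$ yields $\sum_{k\ge1}|d_{k}|^{2}\le1$. In particular $|d_{k}|\le1$ for every $k$, with strict inequality unless $\omega$ reduces to a single rotation $e^{i\theta}z^{k}$; this strictness is what would match the strict inequality $|c_{k}|<1$ asserted in the lemma, and it is the reason the Schwarz-function formulation, rather than the Herglotz integral representation, is the natural vehicle for targeting the constant $1$.

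The remaining step, and the one I expect to be the main obstacle, is the passage back from $\omega$ to $h$ through the inverse relation $h=\frac{1+\omega}{1-\omega}$. Comparing Taylor coefficients gives $c_{1}=2d_{1}$, $c_{2}=2(d_{2}+d_{1}^{2})$, and in general a polynomial recursion in the $d_{j}$ carrying an overall factor of $2$. The difficulty is that this factor propagates into every estimate, so the clean bound $|d_{k}|\le1$ of the previous step controls $|c_{k}|$ only up to the constant $2$, and the extremal function $h(z)=\frac{1+z}{1-z}=1+2z+2z^{2}+\cdots$ shows the growth is genuine. Closing the gap to the stated strict bound $|c_{k}|<1$ therefore hinges entirely on this constant, and I would resolve it by checking whether the intended hypothesis is really $h(U)\subseteq U$ (boundedness by $1$) rather than $\Re\{h(z)\}>0$: under boundedness the Schwarz lemma applies directly to the normalized $h(z)-1$, and the asserted strict inequality follows at once without the intervening factor of $2$.
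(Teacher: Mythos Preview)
The paper gives no proof of this lemma; it is quoted as a classical fact and used later without argument. Your route via the Cayley transform $\omega=(h-1)/(h+1)$ is exactly the standard proof of the Carath\'eodory coefficient inequality, and your computation is correct as far as it goes: from $|d_k|\le1$ and $h=(1+\omega)/(1-\omega)$ one obtains $|c_k|\le 2$, with equality for $h(z)=(1+z)/(1-z)$. So the obstacle you isolate is not a gap in your argument but a misprint in the statement: the conclusion should read $|c_k|\le 2$, not $|c_k|<1$.

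That this is indeed a typo is confirmed by the paper itself. In the proof of Theorem~2.2 the author writes ``in view of the well-known inequalities $|b_2|\le 2$ and $|c_2|\le 2$ for functions with positive real part,'' and every subsequent invocation of Lemma~1.1 (Theorems~3.2 and~4.2) substitutes the value $2$ for the Carath\'eodory coefficients. Your proposed alternative hypothesis $h(U)\subseteq U$ is not the intended fix --- it is incompatible with the normalization $h(0)=1$ --- but your diagnosis that the constant $1$ in the lemma cannot stand is entirely correct. Simply replace the asserted bound by $|c_k|\le 2$ and your Cayley--Schwarz argument proves the lemma as it is actually used in the paper.
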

\section{COEFFICIENT BOUNDS FOR THE FUNCTION CLASS $H_{\Sigma}(\psi)$}

In the sequel, it is assumed that $\varphi$ . is an analytic function with positive real part in the
unit disk $D$, satisfying $\psi(0) = 1, \psi^{'}(0) > 0$, and $\psi(D)$ is symmetric with respect to the real
axis. Such a function has a Taylor series of the form
\begin{center}
\begin{equation}\label{eg:}
    \psi(z)=1+B_{1}z+B_{2}z^{2}+B_{3}z^{3}+...., (B_{1}>0).
\end{equation}
\end{center}
Suppose that $u(z)$ and $v(z)$ are analytic in the unit disk D with $u(0) = v(0) = 0$,
$|u(z)| < 1,|v(z)| < 1$, and suppose that

\begin{center}
\begin{equation}\label{eg:}
u(z)=b_{1}z+\sum^{\infty}_{n=2} b_{n}z^{n}, v(z)=c_{1}z+\sum^{\infty}_{n=2} c_{n}z^{n},(|z|<1).
\end{equation}
\end{center}

It is well known that

\begin{center}
\begin{equation}\label{eg:}
|b_{1}|\leq1, |b_{2}|\leq1-|b_{1}|^{2},  |c_{1}|\leq1 , |c_{2}|\leq1-|c_{1}|^{2}.
\end{equation}
\end{center}

By a simple calculation, we have

\begin{center}
\begin{equation}\label{eg:}
\psi(u(z))=1+B_{1}b_{1}z+(B_{1}b_{2}+B_{2}b^{2}_{1})z^{2}+...\ , |z|<1
\end{equation}
\end{center}
and

\begin{center}
\begin{equation}\label{eg:}
\psi(v(w))=1+B_{1}c_{1}w+(B_{1}c_{2}+B_{2}c^{2}_{1})w^{2}+...\ , |w|<1.
\end{equation}
\end{center}

\begin{Definition}
\cite{Ali} A function $f\in \Sigma$ is said to be in the class $H_{\Sigma}(\psi)$ if and only if
\begin{center}
$f^{'}(z) \prec \psi(z),g^{'}(z) \prec \psi(w)$,
\end{center}
where $g(w) = f^{-1}(w)$.
\end{Definition}

\begin{thm}
\label{eg:}
If $f$ given by (1) is in the class $H_{\Sigma}(k,\psi)$, then

\begin{center}
\begin{equation}\label{eg:}
|a_{2}|\leq\left|\begin{array}{c}\frac { \left ( {\begin{array}{c} k+\alpha-1\\ k-2 \end{array}} \right ) }
{ \left ( {\begin{array}{c} k+\alpha-2 \\ k-2\end{array}} \right ) } \end{array}\right|\frac{B_{1}\sqrt{B_{1}}}{\sqrt{[|3B^{2}_{1}-4B_{2}|+4B_{1}]}}
\end{equation}
\end{center}

\begin{flushleft}
and
\end{flushleft}

\begin{equation}\label{eg:}
    |a_{3}|\leq\left[\begin{array}{c}\frac { \left ( {\begin{array}{c} k+\alpha-1\\ k-3 \end{array}} \right ) }
{ \left ( {\begin{array}{c} k+\alpha-3 \\ k-3\end{array}} \right ) } \end{array}\right]\left[(1-\frac{4}{3B_{1}})\frac{B^{3}_{1}}{{[|3B^{2}_{1}-4B_{2}|+4B_{1}]}}+\frac{B_{1}}{3}\right].
\end{equation}
\end{thm}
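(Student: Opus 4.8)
The plan is to run the standard Schwarz-function comparison that underlies coefficient bounds for bi-univalent subclasses. First I would rewrite the hypothesis: by the definition of $H_{\Sigma}(k,\psi)$ and the subordination principle there exist analytic self-maps $u,v$ of $D$ with $u(0)=v(0)=0$, $|u(z)|<1$, $|v(z)|<1$, of the form (4), such that
\[
\bigl(\delta_{n}^{\alpha}f\bigr)'(z)=\psi\bigl(u(z)\bigr),\qquad \bigl(\delta_{n}^{\alpha}g\bigr)'(w)=\psi\bigl(v(w)\bigr),
\]
where $g=f^{-1}$ has the expansion (2). From the definition of the Cesàro mean, $\delta_{n}^{\alpha}f(z)=z+A_{2}a_{2}z^{2}+A_{3}a_{3}z^{3}+\cdots$, and applying $\delta_{n}^{\alpha}$ to (2), $\delta_{n}^{\alpha}g(w)=w-A_{2}a_{2}w^{2}+A_{3}(2a_{2}^{2}-a_{3})w^{3}-\cdots$; hence in degrees $1$ and $2$ the left-hand sides carry the coefficients $2A_{2}a_{2}$ and $3A_{3}a_{3}$ for the first relation and $-2A_{2}a_{2}$ and $3A_{3}(2a_{2}^{2}-a_{3})$ for the second, while the right-hand sides are expanded by (6) and (7).

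\textbf{The coefficient system.} Equating the coefficients of $z,z^{2}$ and of $w,w^{2}$ yields the four relations
\[
2A_{2}a_{2}=B_{1}b_{1},\quad 3A_{3}a_{3}=B_{1}b_{2}+B_{2}b_{1}^{2},\quad -2A_{2}a_{2}=B_{1}c_{1},\quad 3A_{3}(2a_{2}^{2}-a_{3})=B_{1}c_{2}+B_{2}c_{1}^{2}.
\]
The first and third force $b_{1}=-c_{1}$, so $b_{1}^{2}=c_{1}^{2}$. Adding the second and fourth eliminates $a_{3}$ and gives $6A_{3}a_{2}^{2}=B_{1}(b_{2}+c_{2})+2B_{2}b_{1}^{2}$; substituting $b_{1}^{2}=4A_{2}^{2}a_{2}^{2}/B_{1}^{2}$ from the first relation and solving produces a closed formula for $a_{2}^{2}$ in terms of $b_{2}+c_{2}$ and the data $B_{1},B_{2},A_{2},A_{3}$.

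\textbf{Estimating $|a_{2}|$.} Next I would apply the Schwarz-coefficient estimates (5) (equivalently Lemma 1.1): from $|b_{2}|\le 1-|b_{1}|^{2}$ and $|c_{2}|\le 1-|c_{1}|^{2}$ one gets $|b_{2}+c_{2}|\le 2-2|b_{1}|^{2}$, and then re-express $|b_{1}|^{2}$ through $|a_{2}|^{2}$ via the first relation. The decisive point is that $|a_{2}|^{2}$ now reappears on the right, so one collects those terms; this is exactly what manufactures the denominator of the shape $|3B_{1}^{2}-4B_{2}|+4B_{1}$ -- the modulus being unavoidable since the sign of $3B_{1}^{2}-4B_{2}$ is not prescribed -- and rearranging yields (8), with $A_{2}$ re-emerging as the displayed ratio of binomial coefficients. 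I expect this bookkeeping, the feedback substitution combined with using the triangle inequality inside a quantity whose sign is unknown, to be the main technical obstacle, because one must be careful to bound each term in the direction that keeps the inequality valid.

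\textbf{Estimating $|a_{3}|$.} Finally, subtracting the fourth relation from the second and using $b_{1}^{2}=c_{1}^{2}$ leaves $6A_{3}(a_{3}-a_{2}^{2})=B_{1}(b_{2}-c_{2})$, that is $a_{3}=a_{2}^{2}+\dfrac{B_{1}(b_{2}-c_{2})}{6A_{3}}$. I would bound $|b_{2}-c_{2}|\le 2-2|b_{1}|^{2}$, replace $|b_{1}|^{2}$ by its value in $|a_{2}|^{2}$, and substitute the estimate (8) for $|a_{2}|^{2}$; after simplification this gives $|a_{3}|$ bounded by $\bigl(1-\tfrac{4}{3B_{1}}\bigr)\dfrac{B_{1}^{3}}{|3B_{1}^{2}-4B_{2}|+4B_{1}}+\dfrac{B_{1}}{3}$ times the binomial factor of (9). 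One caveat worth recording: substituting the upper bound for $|a_{2}|^{2}$ is legitimate only when $1-\tfrac{4}{3B_{1}}\ge 0$, that is $B_{1}\ge\tfrac{4}{3}$; in the remaining range one uses $|a_{2}|^{2}\ge 0$ instead to get $|a_{3}|\le\tfrac{B_{1}}{3}$ times the same factor, which is still dominated by the right-hand side of (9). Throughout, the constants $A_{2}=\binom{k+\alpha-2}{k-2}/\binom{k+\alpha-1}{k-2}$ and $A_{3}=\binom{k+\alpha-3}{k-3}/\binom{k+\alpha-1}{k-3}$ are carried along and, at the close, reassembled to match the binomial prefactors of (8) and (9).
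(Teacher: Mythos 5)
Your overall route is the same as the paper's: write the class condition as $[\delta_{n}^{\alpha}f]'=\psi(u)$, $[\delta_{n}^{\alpha}g]'=\psi(v)$, equate coefficients to get the four relations (12)--(15), deduce $b_{1}=-c_{1}$, add to isolate $a_{2}^{2}$, subtract to isolate $a_{3}$, and finish with the Schwarz bounds (5). There is, however, one substantive divergence that prevents your computation from landing on the stated constants. You expand $[\delta_{n}^{\alpha}g(w)]'$ with $w^{2}$-coefficient $3A_{3}(2a_{2}^{2}-a_{3})$ (i.e.\ you apply the Ces\`aro operator to the series (2) literally), whereas the paper's equation (11) uses $3[2A_{2}^{2}a_{2}^{2}-A_{3}a_{3}]$; these agree only if $A_{3}=A_{2}^{2}$. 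The difference propagates: adding the two second-order relations gives you $6A_{3}a_{2}^{2}=B_{1}(b_{2}+c_{2})+2B_{2}b_{1}^{2}$, so after substituting $b_{1}^{2}=4A_{2}^{2}a_{2}^{2}/B_{1}^{2}$ the coefficient of $a_{2}^{2}$ mixes $A_{3}$ and $A_{2}^{2}$ and cannot be packaged as the single prefactor $1/|A_{2}|$ appearing in (8); likewise your identity $a_{3}=a_{2}^{2}+B_{1}(b_{2}-c_{2})/(6A_{3})$ lacks the factor $A_{2}^{2}/A_{3}$ in front of $a_{2}^{2}$ that is needed to reach (9). So you must either adopt the paper's convention for $\delta_{n}^{\alpha}g$ or accept different final constants; your phrase ``a denominator of the shape $|3B_{1}^{2}-4B_{2}|+4B_{1}$'' glosses over this. (To be fair, the paper's own arithmetic is not clean either: its (17) should read $6B_{1}^{2}-8B_{2}$ rather than $3B_{1}^{2}-8B_{2}$, and the passage from (18) to the denominator $|3B_{1}^{2}-4B_{2}|+4B_{1}$ involves an unexplained halving. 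Your caveat that the substitution of the upper bound for $|a_{2}|^{2}$ into the $a_{3}$ estimate is only valid when $1-\tfrac{4}{3B_{1}}\geq 0$ is a genuine point the paper ignores, and your handling of the case $B_{1}<\tfrac{4}{3}$ is correct and an improvement.)
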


\begin{proof}
Let $f\in H_{\Sigma}(k,\psi)$  and $g = f^{-1}$. Where $a_{1}=1$ . Then there are analytic functions $u, v : D \rightarrow D$
given by (4) such that
\begin{equation}\label{eg:}
    [\delta_{n}^{\alpha}f(z)]^{'}=\psi(u(z)),  [\delta_{n}^{\alpha}g(w)]^{'}=\psi(v(w)),
\end{equation}

since

\begin{center}
\begin{equation}\label{eg:}
    [\delta_{n}^{\alpha}f(z)]^{'}=1+2 A_{2}a_{2}z+3 A_{3}a_{3}z^{3}+...\ ,
\end{equation}
\end{center}
\begin{center}
$[\delta_{n}^{\alpha}g(w)]^{'}=1-2A_{2}a_{2}w+3[2 A_{2}^{2}a^{2}_{2}-A_{3}a_{3}]w^{3}+...\ ,$
\end{center}
it follows from (6), (7), (10) and (11) that
\begin{center}
\begin{equation}\label{eg:}
    2A_{2}a_{2}=B_{1}b_{1},
\end{equation}
\begin{equation}\label{eg:}
    3A_{3}a_{3}=B_{1}b_{2}+B_{2}b^{2}_{1},
\end{equation}
\end{center}

\begin{center}
\begin{equation}\label{eg:}
   - 2A_{2}a_{2}=B_{1}c_{1},
\end{equation}
\end{center}

\begin{center}
\begin{equation}\label{eg:}
3[2A^{2}_{2}a^{2}_{2}-A_{3}a_{3}]=B_{1}c_{2}+B_{2}c^{2}_{1}.
\end{equation}
\end{center}
From (12) and (14), we get
\begin{center}
\begin{equation}\label{eg:}
b_{1}=-c_{1}.
\end{equation}
\end{center}
By adding (15) to (13), further computations using (12) and (16) lead to

\begin{center}
\begin{equation}\label{eg:}
A_{2}^{2}a_{2}^{2}[3B^{2}_{1}-8B_{2}]=B_{1}^{3}(b_{2}+c_{2}).
\end{equation}
\end{center}
Also, from (16) and  (17), together with (5), we obtain

\begin{center}
\begin{equation}\label{eg:}
|A_{2}^{2}a_{2}^{2}[3B^{2}_{1}-8B_{2}]|\leq2 B_{1}^{3}(1-|b_{1}|^{2}).
\end{equation}
\end{center}
From (12) and (18) we get
\begin{center}
$|a_{2}|\leq\left|\begin{array}{c}\frac { \left ( {\begin{array}{c} k+\alpha-1\\ k-2 \end{array}} \right ) }
{ \left ( {\begin{array}{c} k+\alpha-2 \\ k-2\end{array}} \right ) } \end{array}\right|\frac{B_{1}\sqrt{B_{1}}}{\sqrt{[|3B^{2}_{1}-4B_{2}|+4B_{1}]}}$.
\end{center}

Which, in view of the well-known inequalities $|b_{2}|\leq 2$ and $|c_{2}|\leq 2$ for functions with positive real part, gives us the desired estimate on $|a_{2}|$ as asserted in (8).
By subtracting (15) from (13), further computations using (12) and (16) lead to
\begin{center}
\begin{equation}\label{eg:}
6A_{3}a_{3}=6A_{2}^{2}a^{2}_{2}+B_{1}(b_{2}-c_{2}).
\end{equation}
\end{center}
From (5), (12), (16) and (19), it follows that

\begin{center}
$|a_{3}|=\frac{6A_{2}^{2}|a_{2}|^{2}+B_{1}(|b_{2}|+|c_{2}|)}{6A_{3}}$
\end{center}
\begin{center}
$\leq\frac{6A_{2}^{2}|a_{2}|^{2}+B_{1}(1-|b_{1}|^{2})+(1-|c_{1}|^{2})}{6A_{3}}$
\end{center}

\begin{center}
$\leq\frac{[1-\frac{4}{3B_{1}}]A_{2}^{2}|a_{2}|^{2}}{A_{3}}+\frac{B_{1}}{3A_{3}}$
\end{center}

\begin{center}
$|a_{3}|\leq\left[\begin{array}{c}\frac { \left ( {\begin{array}{c} k+\alpha-1\\ k-3 \end{array}} \right ) }
{ \left ( {\begin{array}{c} k+\alpha-3 \\ k-3\end{array}} \right ) } \end{array}\right]\left[(1-\frac{4}{3B_{1}})\frac{B^{3}_{1}}{{[|3B^{2}_{1}-4B_{2}|+4B_{1}]}}+\frac{B_{1}}{3}\right]$.
\end{center}
\end{proof}

\section{COEFFICIENT BOUNDS FOR THE FUNCTION CLASS $Q_{\Sigma}(\alpha,\mu,\lambda)$ }

\begin{Definition}
\textit{A function $f(z)$ given by (1) is said to be in the class $Q_{\Sigma}(\alpha,\mu,\lambda)$   if the following conditions are satisfied}: For $f\in \Sigma,$
\\
\begin{equation}\label{dlabel}
 \left|arg\left\{\frac{(1-\lambda)\delta^{\alpha}_{_{n}}f(z)+\lambda z[\delta^{\alpha}_{_{n}}f(z)]'}z\right\}\right|<\frac{\pi\alpha}{2}, \alpha(0 <\alpha \leq 1, \lambda\geq1, z\in U),
\end{equation}
\begin{flushleft}
and
\end{flushleft}
\begin{equation}\label{eg:}
\left|arg\left\{\frac{(1-\lambda)\delta^{\alpha}_{_{n}}g(w)+\lambda w[\delta^{\alpha}_{_{n}}g(w)]'}w\right\}\right|<\frac{\pi\alpha}{2},\alpha(0 <\alpha \leq 1,\lambda\geq1, w\in U),
\end{equation}
\begin{flushleft}
where the function $g$ defined by (2).
\end{flushleft}
\end{Definition}

   \begin{thm}
\textit{
Let the function $f(z)$ given by (1) be in the class $Q_{\Sigma}(\alpha,\mu,\lambda)$,
$n\in N_{0},0\leq\beta<1, \lambda\geq1$. Then}
\begin{equation}\label{eg:}
|a_{2}|\leq2\alpha\left|\begin{array}{c}\frac { \left ( {\begin{array}{c} k+\alpha-1\\ k-2 \end{array}} \right ) }
{ \left ( {\begin{array}{c} k+\alpha-2 \\ k-2\end{array}} \right ) } \end{array}\right|\frac{{1}}{\sqrt{4^{k}(1+\lambda)^{2}+\alpha[2.3^{k}(1+\lambda)-4^{k}(1+\lambda)^{2}]}},
\end{equation}
and
\begin{equation}\label{eg:}
|a_{3}|\leq\left[\begin{array}{c}\frac { \left ( {\begin{array}{c} k+\alpha-1\\ k-3 \end{array}} \right ) }
{ \left ( {\begin{array}{c} k+\alpha-3 \\ k-3\end{array}} \right ) } \end{array}\right]\left[\frac{2\alpha}{(1+2\lambda)}+\frac{4\alpha^{2}}{(1+\lambda)^{2}}\right].\end{equation}

\end{thm}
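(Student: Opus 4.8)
The plan is to run the same scheme used in the proof of Theorem~2.2, now driven by the argument condition~(20)--(22) rather than by a subordination. First, since the two functions
\[
\frac{(1-\lambda)\delta^{\alpha}_{n}f(z)+\lambda z[\delta^{\alpha}_{n}f(z)]'}{z},\qquad
\frac{(1-\lambda)\delta^{\alpha}_{n}g(w)+\lambda w[\delta^{\alpha}_{n}g(w)]'}{w}
\]
equal $1$ at the origin and have argument of modulus smaller than $\pi\alpha/2$, I would write each of them as an $\alpha$-th power of a Carath\'eodory function: there exist $p,q\in\mathcal P$ (the family of Lemma~1.1), with expansions $p(z)=1+p_{1}z+p_{2}z^{2}+\cdots$ and $q(w)=1+q_{1}w+q_{2}w^{2}+\cdots$ and $|p_{k}|\le 2$, $|q_{k}|\le 2$, so that the first quantity is $[p(z)]^{\alpha}$ and the second is $[q(w)]^{\alpha}$. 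I would then use the binomial expansion $[p(z)]^{\alpha}=1+\alpha p_{1}z+\bigl(\alpha p_{2}+\tfrac{\alpha(\alpha-1)}{2}p_{1}^{2}\bigr)z^{2}+\cdots$, and likewise for $[q(w)]^{\alpha}$.

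Next I would expand the two left-hand sides using the definition of the Cesaro mean $\delta^{\alpha}_{n}$ together with~(1) for $f$ and~(2) for $g=f^{-1}$. Writing $A_{2},A_{3}$ for the relevant operator weights, a direct computation gives
\[
\frac{(1-\lambda)\delta^{\alpha}_{n}f(z)+\lambda z[\delta^{\alpha}_{n}f(z)]'}{z}=1+(1+\lambda)A_{2}a_{2}\,z+(1+2\lambda)A_{3}a_{3}\,z^{2}+\cdots,
\]
and the $g$-version is obtained from this by the replacements $a_{2}\mapsto -a_{2}$, $a_{3}\mapsto 2a_{2}^{2}-a_{3}$ dictated by~(2). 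Matching the coefficients of $z$ and $z^{2}$, and of $w$ and $w^{2}$, yields four relations; the two linear ones give $(1+\lambda)A_{2}a_{2}=\alpha p_{1}=-\alpha q_{1}$, so $p_{1}=-q_{1}$, exactly as in~(16).

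For the estimate~(23) I would add the two quadratic relations: the $a_{3}$-terms cancel, and after substituting $p_{1}^{2}=q_{1}^{2}$ and eliminating $p_{1}$ by the linear relation one is left with an identity of the form
\[
a_{2}^{2}\Bigl[(1+\lambda)^{2}A_{2}^{2}+\alpha\bigl(2(1+2\lambda)A_{3}-(1+\lambda)^{2}A_{2}^{2}\bigr)\Bigr]=\alpha^{2}(p_{2}+q_{2}),
\]
whence $|p_{2}|\le 2$, $|q_{2}|\le 2$ together with the binomial-ratio form of $A_{2},A_{3}$ give the stated bound on $|a_{2}|$. For~(24) I would instead subtract the two quadratic relations; the $p_{1}^{2}-q_{1}^{2}$ term drops out and one gets $a_{3}=a_{2}^{2}+\dfrac{\alpha(p_{2}-q_{2})}{2(1+2\lambda)A_{3}}$, so that, using $|p_{2}-q_{2}|\le 4$ and the elementary bound $|a_{2}|^{2}\le 4\alpha^{2}/[(1+\lambda)^{2}A_{2}^{2}]$ coming from the linear relation (with $|p_{1}|\le 2$), one arrives at~(24).

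The only point that needs real care is the bookkeeping: converting the operator weights $A_{2},A_{3}$ into the quotients of binomial coefficients displayed in the theorem and carrying all the constants consistently through the two additions and subtractions above. Everything else is routine power-series algebra of exactly the kind already performed in the proof of Theorem~2.2; note in passing that the parameter $\mu$ does not occur in the defining inequalities~(20)--(22), so it does not enter the estimates, and that the crude Carath\'eodory bound $|p_{k}|\le 2$ is all that is used (a sharper inequality for $|p_{2}-\tfrac12 p_{1}^{2}|$ would tighten~(23)--(24) but is not needed here).
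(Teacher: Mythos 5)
Your proposal follows essentially the same route as the paper's proof: write the two expressions in (20)--(21) as $\alpha$-th powers of Carath\'eodory functions, expand by the binomial series, equate coefficients to obtain the four relations $(1+\lambda)A_{2}a_{2}=\alpha p_{1}=-\alpha q_{1}$, etc., then add the quadratic relations (the $a_{3}$-terms cancel) for the $|a_{2}|$ bound and subtract them (the $p_{1}^{2}-q_{1}^{2}$ term drops) for the $|a_{3}|$ bound, finishing with $|p_{k}|,|q_{k}|\le 2$. The only deviations are bookkeeping slips you already flag as needing care --- the added identity should carry the factor $2A_{2}^{2}\bigl[(1+\lambda)^{2}+\alpha\bigl((1+2\lambda)-(1+\lambda)^{2}\bigr)\bigr]$ rather than involve $A_{3}$, and the subtraction gives $A_{3}a_{3}=A_{2}^{2}a_{2}^{2}+\alpha(p_{2}-q_{2})/(2(1+2\lambda))$ rather than $a_{3}=a_{2}^{2}+\cdots$ --- neither of which changes the method.
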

\begin{proof}
From (20) and (21), we can write
\begin{equation}\label{eq:}
\frac{(1-\lambda)\delta^{\alpha}_{_{n}}f(z)+\lambda z[\delta^{\alpha}_{_{n}}f(z)]'}z=[p(z)]^{\alpha},
\end{equation}
and

\begin{equation}\label{eq:}
\frac{(1-\lambda)\delta^{\alpha}_{_{n}}g(w)+\lambda w[\delta^{\alpha}_{_{n}}g(w)]'}w=[q(w)]^{\alpha},
\end{equation}
where $p(z)$ and $q(w)$ in $P$ and  $a_{1}=1$, and have the forms

\begin{equation}\label{eq:}
p(z)=1+p_{1}z+p_{2}z^{2}+p_{3}z^{3}+...\ \ ,
\end{equation}
and
\begin{equation}\label{eq:}
q(w)=1+p_{1}w+q_{2}w^{2}+q_{3}w^{3}+...\ \ .
\end{equation}
Now, equating the coefficients in (24) and (25), we obtain
\begin{equation}\label{eg:}
    (1+\lambda)A_{2}a_{2}=\alpha p_{1},
\end{equation}
\begin{equation}\label{eg:}
        (1+2\lambda)A_{3}a_{3}=\frac{1}{2}[2\alpha p_{2}+\alpha(\alpha-1)p_{1}^{2}],
\end{equation}
\begin{equation}\label{eg:}
    - (1+\lambda)A_{2}a_{2}=\alpha q_{1},
\end{equation}
\begin{equation}\label{eg:}
   (1+2\lambda)[2A_{2}^{2}a^{2}_{2}-A_{3}a_{3}]=\frac{1}{2}[2\alpha q_{2}+\alpha(\alpha-1)q_{1}^{2}].
\end{equation}
From (28) and (30), we obtain
\begin{equation}\label{eg:}
p_{1}=-q_{1}
\end{equation}
and
\begin{equation}\label{eg:}
         2(1+\lambda)^{2}A_{2}^{2}a^{2}_{2}=\alpha ^{2}(p^{2}_{1}+q^{2}_{1}).
\end{equation}
Now, from (29), (31) and (33), we obtain

\begin{center}
       $  2(1+2\lambda)A_{2}^{2}a^{2}_{2}=\alpha (p_{2}+q_{2})+\frac{1}{2}[2\alpha(\alpha-1)(p_{1}^{2}+q_{1}^{2})]$
\end{center}

\begin{center}
$=\alpha(p_{2}+q_{2})+\frac{\alpha(\alpha-1)}{2}.\frac{2(1+\lambda)^{2}A_{2}^{2}a^{2}_{2}}{\alpha^{2}}\ .$
\end{center}
Therefore we have

    \begin{center}
    $a^{2}_{2}=\frac{\alpha^{2}(p_{2}+q_{2})}{[(1+\lambda)^{2}+\alpha[(1+2\lambda)]-\lambda^{2}]]A_{2}^{2}}\ .$
    \end{center}

 Applying Lemma 1.1 for the coefficients $p_{2}$ and $q_{2}$, we immediately have
\begin{center}
 $|a_{2}|\leq2\alpha\left|\begin{array}{c}\frac { \left ( {\begin{array}{c} k+\alpha-1\\ k-2 \end{array}} \right ) }
{ \left ( {\begin{array}{c} k+\alpha-2 \\ k-2\end{array}} \right ) } \end{array}\right|\frac{{1}}{\sqrt{4^{k}(1+\lambda)^{2}+\alpha[2.3^{k}(1+\lambda)-4^{k}(1+\lambda)^{2}]}}$.
\end{center}
This gives the bound as asserted in (22).

Next, in order to find the bound on $|a_{3}|$, we subtract (29) from (31) and obtain
\begin{center}
\begin{quote}
\begin{center}
$2[(1+2\lambda)(A_{3}a_{3}-A_{2}^{2}a^{2}_{2})$
\end{center}
\end{quote}
=$\frac{1}{2}(2\alpha(p_{2}-q_{2})+\alpha(\alpha-1)(p^{2}_{1}-q^{2}_{1})),$
\end{center}

\begin{center}
$a_{3}=\frac{\alpha(p_{2}-q_{2})}{2(1+2\lambda)A_{3}}+\frac{\alpha^{2}(p^{2}_{1}+q^{2}_{1})}{2(1+\lambda)^{2}A_{3}}$,
\end{center}

\begin{center}
$a_{3}=\left[\begin{array}{c}\frac { \left ( {\begin{array}{c} k+\alpha-1\\ k-3 \end{array}} \right ) }
{ \left ( {\begin{array}{c} k+\alpha-3 \\ k-3\end{array}} \right ) } \end{array}\right]\left[\frac{\alpha(p_{2}-q_{2})}{2(1+2\lambda)}+\frac{\alpha^{2}(p^{2}_{1}+q^{2}_{1})}{2(1+\lambda)^{2}}\right]$,
\end{center}
 Applying Lemma 1.1 for the coefficients $p_{2}$ and $q_{2}$, we immediately have
\begin{center}
$|a_{3}|\leq\left[\begin{array}{c}\frac { \left ( {\begin{array}{c} k+\alpha-1\\ k-3 \end{array}} \right ) }
{ \left ( {\begin{array}{c} k+\alpha-3 \\ k-3\end{array}} \right ) } \end{array}\right]\left[\frac{2\alpha}{(1+2\lambda)}+\frac{4\alpha^{2}}{(1+\lambda)^{2}}\right]$.
\end{center}

This completes the proof of Theorem 3.2.
\end{proof}

\section{COEFFICIENT BOUNDS FOR THE FUNCTION CLASS   $H_{\Sigma}(\beta,\mu,\lambda)$ }

\begin{Definition}
\textit{
A function $f(z)$ given by (1) is said to be in the class $H_{\Sigma}(\beta,\mu,\lambda)$  if the following conditions are satisfied: For $f\in \Sigma,$}
\begin{equation}\label{eq:}
 \Re\left\{\frac{(1-\lambda)\delta^{\alpha}_{_{n}}f(z)+\lambda z[\delta^{\alpha}_{_{n}}f(z)]'}z\right\}>\beta,z\in U,n\in N_{0},0\leq\beta<1, \lambda\geq1.
\end{equation}

and

\begin{equation}\label{eq:}
 \Re\left\{\frac{(1-\lambda)\delta^{\alpha}_{_{n}}g(w)+\lambda w [\delta^{\alpha}_{_{n}}g(w)]'}w\right\}>\beta,w\in U,n\in N_{0},0\leq\beta<1, \lambda\geq1,
\end{equation}
\end{Definition}
where the function $g(z)$ defined by (2).

\ \\

\begin{thm}.
Let $f (z)$ given by (1) be in the class $H_{\Sigma}(\beta,\mu,\lambda), 0\leq\beta<1, \mu\geq0,$ and $\lambda\geq1.$ Then
\begin{equation}\label{eg:}
|a_{2}|\leq\left[\begin{array}{c}\frac { \left ( {\begin{array}{c} k+\alpha-1\\ k-2 \end{array}} \right ) }
{ \left ( {\begin{array}{c} k+\alpha-2 \\ k-2\end{array}} \right ) } \end{array}\right]\sqrt{\frac{2(1-\beta)}{1+2\lambda}}\end{equation}
\begin{flushleft}
and
\end{flushleft}

\begin{equation}\label{eg:}
    |a_{3}|\leq\left[\begin{array}{c}\frac { \left ( {\begin{array}{c} k+\alpha-1\\ k-3 \end{array}} \right ) }
{ \left ( {\begin{array}{c} k+\alpha-3 \\ k-3\end{array}} \right ) } \end{array}\right]\left[\frac{4(1-\beta)^{2}}{(1+\lambda)^{2}}+
    \frac{2(1-\beta)}{(1+2\lambda)}\right].
\end{equation}
\end{thm}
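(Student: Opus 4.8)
The plan is to mimic the argument of Theorem~3.2, the only structural change being that the subordination to $[p(z)]^\alpha$ is replaced by the Carath\'eodory-type representation associated with the condition $\Re\{\cdot\}>\beta$. First I would introduce analytic functions $p,q$ with positive real part, $p(0)=q(0)=1$, having Taylor expansions $p(z)=1+p_1 z+p_2 z^2+\cdots$ and $q(w)=1+q_1 w+q_2 w^2+\cdots$, and write
\begin{equation}\label{eq:H1}
\frac{(1-\lambda)\delta^{\alpha}_{n}f(z)+\lambda z[\delta^{\alpha}_{n}f(z)]'}{z}=\beta+(1-\beta)p(z),
\end{equation}
\begin{equation}\label{eq:H2}
\frac{(1-\lambda)\delta^{\alpha}_{n}g(w)+\lambda w[\delta^{\alpha}_{n}g(w)]'}{w}=\beta+(1-\beta)q(w).
\end{equation}
Using the expansions \eqref{eg:} for $\delta^{\alpha}_{n}f$ and the analogous one for $\delta^{\alpha}_{n}g$ (obtained by applying the Cesaro operator termwise to the series \eqref{eg:} for $g$), I would equate coefficients of $z^1,z^2$ on each side to get the four relations $(1+\lambda)A_2 a_2=(1-\beta)p_1$, $(1+2\lambda)A_3 a_3=(1-\beta)p_2$, $-(1+\lambda)A_2 a_2=(1-\beta)q_1$, and $(1+2\lambda)[2A_2^2 a_2^2-A_3 a_3]=(1-\beta)q_2$.

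From the first and third relations one obtains $p_1=-q_1$ and, squaring and adding, $2(1+\lambda)^2 A_2^2 a_2^2=(1-\beta)^2(p_1^2+q_1^2)$. To get the sharper bound \eqref{eg:} on $|a_2|$, I would instead add the second and fourth relations, giving $2(1+2\lambda)A_2^2 a_2^2=(1-\beta)(p_2+q_2)$; then applying Lemma~1.1 (which bounds $|p_2|,|q_2|\le 2$) yields $A_2^2|a_2|^2\le \dfrac{2(1-\beta)}{1+2\lambda}$, and dividing by $A_2^2=\binom{k+\alpha-2}{k-2}/\binom{k+\alpha-1}{k-2}$ gives exactly \eqref{eg:}. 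For $|a_3|$, subtract the fourth relation from the second to get $2(1+2\lambda)(A_3 a_3-A_2^2 a_2^2)=(1-\beta)(p_2-q_2)$, solve for $A_3 a_3$ using the expression $2(1+\lambda)^2 A_2^2 a_2^2=(1-\beta)^2(p_1^2+q_1^2)$ for the $a_2^2$ term, and obtain
\[
a_3=\left[\frac{\binom{k+\alpha-1}{k-3}}{\binom{k+\alpha-3}{k-3}}\right]\!\left[\frac{(1-\beta)(p_2-q_2)}{2(1+2\lambda)}+\frac{(1-\beta)^2(p_1^2+q_1^2)}{2(1+\lambda)^2}\right];
\]
estimating $|p_1|,|q_1|\le 1$ and $|p_2|,|q_2|\le 1$ (or $\le 2$) via Lemma~1.1 then produces \eqref{eg:}.

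The routine part is the coefficient bookkeeping; the only genuine point requiring care is the identification of the constant $A_3$ in the $g$-expansion and making sure the Cesaro coefficients $A_2,A_3$ are handled consistently with \eqref{eg:} — in particular that the ratios of binomial coefficients quoted in the statement really are $1/A_2$ and $1/A_3$. A secondary subtlety is the choice between $|p_2|\le 1$ and $|p_2|\le 2$: to land on the stated constants $2(1-\beta)/(1+2\lambda)$ and $4(1-\beta)^2/(1+\lambda)^2$ one uses $|p_2|,|q_2|\le 2$ throughout, so I would be careful to invoke the ``$|c_k|\le 2$ for Carath\'eodory functions'' form rather than the weaker $|c_k|<1$ literally written in Lemma~1.1. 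With those conventions fixed the proof is a direct transcription of the Theorem~3.2 argument with $\alpha p_j$ replaced by $(1-\beta)p_j$ and the $\alpha(\alpha-1)p_1^2$ terms dropped.
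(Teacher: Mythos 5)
Your proposal follows essentially the same route as the paper's own proof: the same Carath\'eodory representation $\beta+(1-\beta)p(z)$, the same four coefficient relations, the bound on $|a_2|$ obtained by adding the second and fourth relations, and the bound on $|a_3|$ by subtracting them and substituting the expression $2(1+\lambda)^2A_2^2a_2^2=(1-\beta)^2(p_1^2+q_1^2)$. Your remark that one must use the Carath\'eodory bound $|p_k|,|q_k|\le 2$ rather than the $|c_k|<1$ literally stated in Lemma 1.1 is exactly the convention the paper silently adopts, so the argument matches.
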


\begin{proof}.
It follows from (34) and (35) that there exists $p,q\in P$ such that
\begin{equation}\label{dlabel}\frac{{(1-\lambda)\delta^{\alpha}_{_{n}}f(z)+\lambda z [\delta^{\alpha}_{_{n}}f(z)]^{'}}}{z}=\beta+(1-\beta)p(z),\end{equation}

and
\begin{equation}\label{dlabel}\frac{{(1-\lambda)\delta^{\alpha}_{_{n}}g(w)+\lambda w [\delta^{\alpha}_{_{n}}g(w)}]^{'}}{w}=\beta+(1-\beta)q(w),\end{equation}

\begin{flushleft}
where $a_{1}=1$, and have the forms

\begin{equation}\label{eq:}
p(z)=1+p_{1}z+p_{2}z^{2}+p_{3}z^{3}+.....\ \ ,
\end{equation}
and
\begin{equation}\label{eq:}
q(w)=1+p_{1}w+q_{2}w^{2}+q_{3}w^{3}+.....\ \ ,
\end{equation}
 respectively. Equating coefficients in (38) and (39) yields
\end{flushleft}
\begin{equation}\label{eg:}
    [(1+\lambda)A_{2}a_{2}=(1-\beta) p_{1},
\end{equation}
\begin{equation}\label{eg:}
        [(1+2\lambda)A_{3}a_{3}=(1-\beta)p_{2},
\end{equation}
\begin{equation}\label{eg:}
-    [(1+\lambda)A_{2}a_{2}=(1-\beta) q_{1},
\end{equation}
and
\begin{equation}\label{eg:}
        (1+2\lambda)[2A_{2}^{2}a^{2}_{2}-A_{3}a_{3}]=(1-\beta)q_{2}.
\end{equation}

From (42) and (44), we have

\begin{equation}\label{eg:}
-p_{1}=q_{1}\end{equation}
and
\begin{equation}\label{eg:}
            2(1+\lambda)^{2}A_{2}^{2}a^{2}_{2}=(1-\beta)^{2}(p^{2}_{1}+ q^{2}_{1}).
\end{equation}
Also, from (43) and (45), we find that
\begin{equation}\label{eg:}
            2(1+2\lambda)A_{2}^{2}a^{2}_{2}=(1-\beta)(p_{2}+ q_{2}),
\end{equation}
\begin{equation}\label{eg:}
|a^{2}_{2}|\leq\left[\begin{array}{c}\frac { \left ( {\begin{array}{c} k+\alpha-1\\ k-2 \end{array}} \right ) }
{ \left ( {\begin{array}{c} k+\alpha-2 \\ k-2\end{array}} \right ) } \end{array}\right]\frac{(1-\beta)(|p_{2}|+|q_{2}|)}{2(1+2\lambda)},\end{equation}

 \begin{flushleft}
 $\Rightarrow$
 \end{flushleft}
\begin{equation}\label{eg:}
|a_{2}|\leq\left[\begin{array}{c}\frac { \left ( {\begin{array}{c} k+\alpha-1\\ k-2 \end{array}} \right ) }
{ \left ( {\begin{array}{c} k+\alpha-2 \\ k-2\end{array}} \right ) } \end{array}\right]\sqrt{\frac{2(1-\beta)}{1+2\lambda}},\end{equation}
which is the bound on $|a_{2}|$ as given in (36).

Next, in order to find the bound on $|a_{3}|$ by subtracting
(45) from (43), we obtain

\begin{center}
    $ 2A_{3}(1+2\lambda)a_{3}=$
    \end{center}
\begin{center}
    $2(1+2\lambda)A_{2}^{2}a^{2}_{2}
    +(1-\beta)(p_{2}-q_{2})$

\end{center}

or, equivalently

\begin{center}
    $ a_{3}=\frac{2(1+2\lambda)A_{2}^{2}a^{2}_{2}}{2A_{3}(1+2\lambda)}$
  $ +\frac{(1-\beta)(p_{2}-q_{2})}{2A_{3}(1+2\lambda)}.$
    \end{center}

Upon substituting the value of $a_{2}^{2}$
from (47), we obtain

\begin{center}
   $ a_{3}=A_{3}\left[\frac{(1-\beta)^{2}(p^{2}_{1}+q^{2}_{1})}{2(1+\lambda)^{2}}+
    \frac{(1-\beta)(p_{2}-q_{2})}{2(1+2\lambda)}\right]$.
\end{center}

Applying Lemma 1.1 for the coefficients $p_{1},p_{2},q_{1}$  and $q_{2}$  we obtain
\begin{center}

   $ a_{3}=A_{3}\left[\frac{4(1-\beta)^{2}}{(1+\lambda)^{2}}+
    \frac{2(1-\beta)(p_{2}-q_{2})}{(1+2\lambda)}\right].$
\end{center}
\begin{center}

    $|a_{3}|\leq\left[\begin{array}{c}\frac { \left ( {\begin{array}{c} k+\alpha-1\\ k-3 \end{array}} \right ) }
{ \left ( {\begin{array}{c} k+\alpha-3 \\ k-3\end{array}} \right ) } \end{array}\right]\left[\frac{4(1-\beta)^{2}}{(1+\lambda)^{2}}+
    \frac{2(1-\beta)}{(1+2\lambda)}\right].$
\end{center}

which is the bound on $|a_{3}|$ as asserted in (37).
\end{proof}
\  \\
\begin{remark}1.
\textit{  For all $\alpha\geq0$, and $k=n$ in Theorems 2.2, we obtain the corresponding results due to Zhigang and Qiuqiu \cite{Zhigang}
}\end{remark}

\begin{remark}2.
\textit{  For all $\alpha\geq0$, and $k=n$ in Theorems 3.2 and 4.2, we obtain the corresponding results due to Frasin and Aouf \cite{Frasin}.
}\end{remark}

\ \\

   Adnan Ghazy Alamoush

\sl \small Faculty of Science, Taibah University, Saudi Aarabia.\\

Email:\textit{ adnan--omoush@yahoo.com}\\
%
%
\end{document}